\newtheorem{prop}{Proposition}[section]
\newtheorem{lem}[prop]{Lemma}
\theoremstyle{definition}
\newtheorem*{claim}{Claim}
\newtheorem{rem}[prop]{Remark}
\newtheorem*{ack}{Acknowledgement}
\def\co{\colon\thinspace}
\newcommand{\C}{\mathbb{C}}
\newcommand{\CP}{\mathbb{C}\mathrm{P}}
\newcommand{\rme}{\mathrm{e}}
\newcommand{\Hp}{\mathbb H}
\newcommand{\rmi}{\mathrm{i}}
\newcommand{\rmj}{\mathrm{j}}
\newcommand{\rmk}{\mathrm{k}}
\newcommand{\N}{\mathbb{N}}
\newcommand{\R}{\mathbb{R}}
\newcommand{\RP}{\mathbb{R}\mathrm{P}}
\newcommand{\oz}{\overline{z}}
\newcommand{\Z}{\mathbb{Z}}
\DeclareMathOperator{\piorb}{\pi_1^{\mathrm{orb}}}
\begin{document}

\author[H.~Geiges]{Hansj\"org Geiges}

\author[C.~Lange]{Christian Lange}
\address{Mathematisches Institut, Universit\"at zu K\"oln,
Weyertal 86--90, 50931 K\"oln, Germany}
\email{geiges@math.uni-koeln.de, clange@math.uni-koeln.de}

\title[Seifert fibrations of lens spaces]{Seifert fibrations of lens
spaces over non-orientable bases}

\date{}

\begin{abstract}
We classify the Seifert fibrations of lens spaces where the base
orbifold is non-orientable. This is an addendum to our
earlier paper `Seifert fibrations of lens spaces'. We
correct Lemma~4.1 of that paper and fill the gap in the
classification that resulted from the erroneous lemma.
\end{abstract}

\subjclass[2020]{57M50; 55R65, 57M10, 57M60}

\keywords{Seifert fibration, Lens space, Non-orientable orbifold}

\thanks{The authors are partially supported by the SFB/TRR 191
`Symplectic Structures in Geometry, Algebra and Dynamics',
funded by the DFG (Projektnummer 281071066 -- TRR 191).}

\maketitle


\section{Introduction}
The aim of this note is to determine the $3$-dimensional lens spaces that
admit a Seifert fibration over a non-orientable base orbifold, and to
classify these fibrations.

In our earlier paper \cite{gela18} we presented an algorithm for finding
all Seifert fibrations of any given lens space.
When the base orbifold is non-orientable, we claimed in
\cite[Lemma~4.1]{gela18} that the base is topologically $\RP^2$
(which is correct), and that there are no singular fibres
(which turns out to be false). As was kindly pointed out to us by
Tye Lidman (jointly with Liam Watson and Robert DeYeso),
the lens spaces $L(4n,2n\pm 1)$
admit, for $n\geq 2$, a Seifert fibration with one singular fibre
over $\RP^2$. As we shall see in Proposition~\ref{prop:main},
these are the only Seifert fibrations over non-orientable bases
missing from the classification in~\cite{gela18}.

The results of \cite{gela18} concerning Seifert fibrations of lens
spaces over orientable bases are not affected by the gap in
\cite[Lemma~4.1]{gela18}.

We follow the notations and conventions of~\cite{gela18} as regards Seifert
invariants. We assume that the reader has a copy of that paper at hand.
The only new notation is $\Sigma(n_1,\ldots, n_k)$
for a $2$-dimensional orbifold whose underlying topological surface
is $\Sigma$, with $k$ orbifold points of order $n_1,\ldots, n_k$,
respectively.
\section{The base of the fibration}
Here we correct Lemma 4.1 from~\cite{gela18}.

\begin{lem}
If a lens space admits a Seifert fibration over a non-orientable base,
the base is $\RP^2$ (as a topological surface), and there is at most
one singular fibre. In other words, the base orbifold is
$\RP^2(n)$ for some $n\in\N=\{1,2,\ldots\}$.
\end{lem}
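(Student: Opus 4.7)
The plan is to constrain $B$ by way of the identification $\piorb(B)=\pi_1(E)/\langle h\rangle$ that comes from the Seifert fibration, with $h$ denoting the class of a regular fibre. Since $\pi_1(E)=\Z/p$ is abelian, the normal closure of $h$ in $\pi_1(E)$ is simply $\langle h\rangle$, so $\piorb(B)$ is a cyclic quotient of $\Z/p$ and in particular is a finite cyclic group.

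I would then invoke the following standard consequence of the geometrization of $2$-orbifolds: a closed $2$-orbifold has finite orbifold fundamental group precisely when it is either a bad orbifold (a teardrop $S^2(n)$ or a spindle $S^2(n,m)$ with $n\ne m$) or has strictly positive orbifold Euler characteristic. Both bad $2$-orbifolds have $S^2$ as their underlying surface and are therefore orientable; since $B$ is non-orientable by hypothesis, we must have $\chi^{\mathrm{orb}}(B)>0$.

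Writing $B=N_g(n_1,\ldots,n_k)$ with $N_g=\#^g\RP^2$ and each $n_i\geq 2$, Gauss--Bonnet gives
\[
\chi^{\mathrm{orb}}(B)=(2-g)-k+\sum_{i=1}^k\frac{1}{n_i}.
\]
Combining the crude bound $\sum 1/n_i\leq k/2$ with $\chi^{\mathrm{orb}}(B)>0$ yields $k<2(2-g)$, which together with $g\geq 1$ forces $g=1$ and $k\in\{0,1\}$. This is exactly the lemma: the underlying surface of $B$ is $\RP^2$ and there is at most one singular fibre.

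The step I expect to require the most care is the geometric input in the second paragraph. One route is to cite the geometrization of $2$-orbifolds directly; a more hands-on alternative is to exhibit on any closed non-orientable $2$-orbifold with $\chi^{\mathrm{orb}}\leq 0$ an explicit Euclidean (if $\chi^{\mathrm{orb}}=0$) or hyperbolic (if $\chi^{\mathrm{orb}}<0$) structure, whose deck group then embeds as an infinite subgroup of $\piorb(B)$, contradicting finiteness. Once that geometric input is in place, the Euler-characteristic bookkeeping above is immediate.
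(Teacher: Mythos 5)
Your argument is correct, and it reaches the same crux as the paper's --- finiteness of $\piorb$ of the base forces positive orbifold Euler characteristic --- but it is organised differently. The paper first quotes its predecessor for the fact that the underlying surface is $\RP^2$, then reduces the cone-point count by a presentation argument (killing $q_3,\ldots,q_k$ shows it suffices to rule out $\RP^2(n_1,n_2)$ with $n_1,n_2>1$), and finally handles that single orbifold either by citing Scott (it is good, finitely covered by a closed surface, and has $\chi\leq 0$, hence is covered by $\R^2$) or, more self-containedly, by exhibiting an explicit infinite tower of orbifold coverings by spheres with cone points arranged along an equator. You instead invoke the full dichotomy ``finite $\piorb$ if and only if bad or $\chi^{\mathrm{orb}}>0$'' for arbitrary closed $2$-orbifolds with cone points, note that the bad ones (teardrops and asymmetric spindles) have underlying surface $S^2$ and so are orientable, and extract both the genus bound and the cone-point bound from the single inequality $k<2(2-g)$; this is cleaner and also re-proves the ``underlying surface is $\RP^2$'' statement rather than importing it from the earlier paper. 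The trade-off is exactly the one you identify: your version leans on the geometrization of $2$-orbifolds as a black box (or on constructing spherical, Euclidean or hyperbolic structures by hand), whereas the paper's explicit covering tower makes the infinite-fundamental-group step completely elementary. Your Euler-characteristic bookkeeping ($\chi^{\mathrm{orb}}\leq(2-g)-k/2$, so positivity forces $g=1$ and $k\leq 1$) is correct.
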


\begin{proof}
The argument in \cite{gela18} for showing that the base has finite cyclic
fundamental group and hence is topologically $\RP^2$ is correct.
Also, the \emph{orbifold} fundamental group of the base orbifold is
a quotient of the fundamental group of the total space
\cite[Section~2.3]{gela18}, and hence must likewise be finite cyclic
in our situation.

The orbifold fundamental group of $\RP^2(n_1,\ldots, n_k)$ admits
the presentation
\[ \piorb\bigl(\RP^2(n_1,\ldots, n_k)\bigr)\cong
\langle a,q_1,\ldots, q_k\,|\, q_1^{n_1},\ldots,q_k^{n_k},\,
q_1\cdots q_ka^2 \rangle. \]
The error in \cite{gela18} occurred in our claim that this group
is abelian if and only if $n_i=1$ for all $i=1,\ldots, k$.
This is plainly wrong, as shown by the isomorphism
\[ \piorb\bigl(\RP^2(n)\bigr)\cong
\langle a,q\,|\,q^n,\, qa^2\rangle\cong
\langle a\,|\, a^{2n}\rangle\cong\Z_{2n}.\]

To prove the lemma, it suffices to establish the
following claim.

\begin{claim}
The orbifold fundamental group of $\RP^2(n_1,\ldots, n_k)$
is finite if and only if at most one $n_i$ is greater than~$1$.
\end{claim}

The `if' direction is given by the example above. For the `only if' direction
we observe that $\piorb\bigl(\RP^2(n_1,n_2)\bigr)$ is a quotient
group of $\piorb\bigl(\RP^2(n_1,n_2,\ldots, n_k)\bigr)$,
obtained by dividing out the normal subgroup generated by $q_3,\ldots, q_k$.
Therefore, it suffices to show that $\piorb\bigl(\RP^2(n_1,n_2)\bigr)$
is not finite for $n_1,n_2>1$.

One way to argue is as follows. By \cite[Theorem~2.3]{scot83},
the orbifold $\RP^2(n_1,n_2)$ is good, i.e.\ it is covered by a
manifold, and then by \cite[Theorem~2.5]{scot83} it is in fact
\emph{finitely} covered by a compact smooth surface.
The Euler characteristic of
$\RP^2(n_1,n_2)$ equals, by \cite[page~427]{scot83},
\[ \chi\bigl(\RP^2(n_1,n_2)\bigr)=1-\Bigl(1-\frac{1}{n_1}\Bigr)
-\Bigl(1-\frac{1}{n_2}\Bigr)=-1+\frac{1}{n_1}+\frac{1}{n_2},\]
which is non-positive for $n_1,n_2>1$. By its definition,
the Euler characteristic is multiplicative under finite coverings,
so $\RP^2(n_1,n_2)$ is covered by a compact smooth surface of
non-positive Euler characteristic, hence by~$\R^2$, and so
the orbifold fundamental group is infinite.

The classification of good vs.\ bad orbifolds as described in
\cite{scot83} rests on the existence of universal covering orbifolds
and the construction of proper coverings for the orbifolds not
on the list of bad ones.
For $\RP^2(n_1,n_2)$, $n_1,n_2>1$, we can explicitly exhibit
an infinite tower of proper coverings to give a direct proof
that $\piorb\bigl(\RP^2(n_1,n_2)\bigr)$ is not finite.
Consider $S^2(n_1,n_1,n_2,n_2)$, with the orbifold points of
order $n_i$ arranged in antipodal position on the round $2$-sphere,
$i=1,2$. The quotient under the antipodal map then gives a $2$-fold covering
\[ S^2(n_1,n_1,n_2,n_2)\longrightarrow \RP^2(n_1,n_2).\]
Next, we consider $S^2(n_2,\ldots, n_2)$ with $2n_1\geq 4$ orbifold
points of order $n_2$ arranged symmetrically along an equator. The quotient
under the rotation about the poles through an angle $2\pi/n_1$ defines
an $n_1$-fold covering
\[ S^2(\underbrace{n_2,\ldots,n_2}_{2n_1})\longrightarrow
S^2(n_1,n_1,n_2,n_2).\]
In the same fashion, we construct an $n_2$-fold covering
\[ S^2(\underbrace{n_2,\ldots,n_2}_{n_2(2n_1-2)})\longrightarrow
S^2(\underbrace{n_2,\ldots,n_2}_{2n_1}).\]
Since $n_2(2n_1-2)\geq 4$, we can continue \emph{ad infinitum}.
This again proves the claim (and the lemma).
\end{proof}

Thus, we are left with classifying the Seifert fibrations with
base $\RP^2(n)$, $n\in\N$, and total space equal to some lens space.
\section{Lens spaces fibring over $\RP^2(n)$}
The case $n=1$ is dealt with in \cite[Proposition~4.2]{gela18}:
the total space can be $L(4,1)$ or $L(4,3)$, and either of these admits
a unique Seifert fibration over $\RP^2$.

Let $M$ be a lens space that admits a Seifert fibration over $\RP^2(n)$,
$n\geq 2$. Using the equivalences of Seifert invariants described in
\cite[Section~2.2]{gela18}, we may absorb any non-singular fibre of type
$(1,b)$, contributing to the Euler class of the bundle, into the invariants
of the singular fibre, and hence write the Seifert fibration of
$M$ in the notation of \cite{gela18} as
\[ M=M\bigl(-1;(n,\beta)\bigr).\]
Then the fundamental group of $M$ has the presentation,
by \cite[Section~2.3]{gela18},
\[ \pi_1(M)\cong\langle a,q,h\,|\, a^{-1}ha=h^{-1},\,
[h,q],\, q^nh^{\beta},\, qa^2\rangle.\]
The inverse of the first relation is $a^{-1}h^{-1}a=h$, and hence
we have $a^{-2}ha^2=h$. It follows that the presentation is
equivalent to
\[ \pi_1(M)\cong\langle a,h\,|\, a^{-1}ha=h^{-1},\,
a^{2n}=h^{\beta}\rangle.\]
The first relation implies $a^{-1}h^{\beta}a=h^{-\beta}$; the second,
$a^{-1}h^{\beta}a=h^{\beta}$. With the resulting (superfluous)
relation $h^{2\beta}=1$, we recognise the presentation of $\pi_1(M)$
by a result of H\"older, cf.~\cite[Lemma~2.1]{hemp00}, as that of
a metacyclic group
\[ \Z_{2\beta}\rightarrowtail \pi_1(M)\twoheadrightarrow\Z_{2n}.\]
Indeed, we may assume $\beta>0$ in the presentation of $\pi_1(M)$,
possibly after replacing the generator $h$ by $h^{-1}$, if necessary.
Then, in the notation of \cite[Lemma~2.1]{hemp00}, one has to set
$x=a$, $y=h$, as well as $k=2n$, $l=\beta$, $m=2\beta$, and $n=2\beta-1$;
then the divisibility conditions $m|(n^k-1)$ and $m|l(n-1)$ are satisfied.
The normal subgroup of order $2\beta$ is generated by~$h$. Every element
of $\pi_1(M)$ can be written uniquely in the form $a^kh^{\ell}$
with $0\leq k\leq 2n-1$ and $0\leq\ell\leq 2\beta-1$, and the projection
to $\Z_{2n}$ is given by $a^kh^{\ell}\mapsto k$.

Here is the main result of this note.

\begin{prop}
\label{prop:main}
The only lens spaces that admit a Seifert fibration over $\RP^2(n)$,
$n\geq 1$, are $L(4n,2n\pm 1)$, and these Seifert fibrations are unique.
\end{prop}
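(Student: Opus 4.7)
The plan has two stages: an algebraic stage that constrains the Seifert invariants, and a geometric stage that identifies the resulting manifold as a specific lens space.

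For the algebraic stage, I would leverage the presentation of $\pi_1(M)$ displayed just above the proposition. Since $M$ is a lens space, $\pi_1(M)$ is cyclic; in any abelian quotient the relation $a^{-1}ha=h^{-1}$ collapses to $h^2=1$. The metacyclic description from H\"older's theorem (as invoked in the excerpt) tells us that $h$ has order exactly $2\beta$ in $\pi_1(M)$, so $h^2=1$ forces $\beta=1$ (having arranged $\beta>0$). The presentation then simplifies to $\langle a\mid a^{4n}\rangle\cong\Z_{4n}$, showing $|\pi_1(M)|=4n$; since $M$ is a Seifert manifold with finite cyclic fundamental group, $M$ is indeed a lens space of order $4n$.

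For the geometric stage, I would identify $M(-1;(n,1))$ with $L(4n,2n+1)$ by producing an explicit surgery description. One natural route is to write down a Kirby diagram for $M(-1;(n,1))$ and reduce it via Rolfsen moves to a single rational surgery on the unknot, aiming at slope $4n/(2n+1)$. A second route is to pass to the orientation double cover $S^2(n,n)\to\RP^2(n)$, lift the fibration to a Seifert fibration of a lens space over $S^2$ with two exceptional fibres of multiplicity $n$ whose invariants can be read off from the original data, and then recover $M$ as the $\Z_2$-quotient. The alternative $L(4n,2n-1)$ comes from reversing the fibre orientation (equivalently, replacing $h$ by $h^{-1}$ and $\beta=1$ by $\beta=-1$), and represents the same unoriented manifold because $-(2n+1)\equiv 2n-1\pmod{4n}$. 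Uniqueness then follows: $(-1;(n,\pm 1))$ are the only Seifert invariants that can occur, and the two signs realise the same Seifert fibration up to the canonical equivalences.

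The algebraic stage is essentially immediate once one has the presentation from the excerpt, so I expect the main obstacle to be the geometric identification: translating the abstract Seifert data $M(-1;(n,1))$ into the standard lens space label $L(p,q)$ requires careful tracking of orientations on the twisted bundle over $\RP^2$ and on the tubular neighbourhood of the singular fibre. Verifying consistency with the known $n=1$ case from \cite[Proposition~4.2]{gela18} provides a useful sanity check.
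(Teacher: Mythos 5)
Your algebraic stage coincides with the paper's `only if' argument: cyclic implies abelian, the relation $a^{-1}ha=h^{-1}$ then forces $h^2=1$, and the H\"older/metacyclic normal form (which is what guarantees that $h$ has order exactly $2\beta$ rather than a proper divisor) yields $\beta=\pm1$; this is exactly how the paper argues. The geometric stage is where you diverge, and it is also the part you have not actually carried out. You offer two candidate routes (a Kirby-calculus reduction, or lifting to the orientation double cover $S^2(n,n)\to\RP^2(n)$), and you additionally invoke the general fact that a Seifert manifold with finite cyclic fundamental group is a lens space. The paper instead runs your second route in the opposite direction: it writes down an explicit free $\Z_{4n}$-action on $S^3$ generated by an isometry $A_+$ reversing the Hopf fibres, checks that the induced $\Z_2$-action on $S^2(n,n)$ is free so that the quotient Seifert fibres over $\RP^2(n)$, and then conjugates $A_+$ in $\mathrm{SO}(4)$ by an explicit quaternionic map to the standard action with quotient $L(4n,2n-1)$, the other lens space arising by reversing orientation. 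This buys a completely explicit identification without appealing to the classification of spherical Seifert manifolds, and, combined with \cite[Theorem~4.4]{gela18} and Jankins--Neumann, it pins down the signs: the paper obtains an orientation-preserving diffeomorphism $M\bigl(-1;(n,\pm1)\bigr)\cong L(4n,2n\mp1)$, which is the \emph{opposite} pairing from the slope $4n/(2n+1)$ you aim at for $M\bigl(-1;(n,1)\bigr)$. Relatedly, your assertion that the two signs ``realise the same Seifert fibration up to the canonical equivalences'' is only true unoriented: $(n,1)$ and $(n,-1)$ are inequivalent as oriented Seifert invariants and give the two distinct oriented lens spaces $L(4n,2n\mp1)$, each with a unique fibration. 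In short, the skeleton of your argument is sound and its first half matches the paper, but the decisive computation --- identifying $M\bigl(-1;(n,\pm1)\bigr)$ with a specific oriented $L(p,q)$ --- remains a plan rather than a proof, and it is precisely the step where the orientation bookkeeping you rightly flag as delicate would need to be done in full.
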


\begin{proof}
We are going to show that the Seifert manifold $M\bigl(-1;(n,\beta)\bigr)$
is a lens space if and only if $\beta=\pm 1$, and that there is 
an orientation-preserving diffeomorphism
\[ M\bigl(-1;(n,\pm 1)\bigr)\cong L(4n,2n\mp 1).\]

The `only if' part follows from the presentation of $\pi_1(M)$, which
is that of a cyclic group only if $a$ commutes with~$h$, which forces
$h^2=1$ and hence $\beta=\pm 1$.

For the `if' part it suffices to exhibit an explicit Seifert fibration
of $L(4n,2n\mp 1)$ over $\RP^2(n)$. Recall that the positive Hopf
fibration of the $3$-sphere is the map
\[ \begin{array}{rcl}
\C^2\supset S^3 & \longrightarrow & S^2=\CP^1\\
(z_1,z_2)       & \longmapsto     & [z_1:z_2].
\end{array}\]
The map
\[ A_+\co (z_1,z_2)\longmapsto
(\rme^{\pi\rmi/2n}\oz_2,\rme^{-\pi\rmi/2n}\oz_1)\]
defines an element of $\mathrm{SO}(4)$, and it sends
the Hopf fibres, i.e.\ the orbits of the $S^1$-action
\[ \theta(z_1,z_2)=(\rme^{\rmi\theta}z_1,\rme^{\rmi\theta}z_2),\;\;\;
\theta\in\R/2\pi\Z,\]
to Hopf fibres, reversing their orientation.
The square of $A_+$ is
\[ A_+^2\co (z_1,z_2)\longrightarrow
(\rme^{\pi\rmi/n}z_1,\rme^{-\pi\rmi/n}z_2),\]
which obviously defines a free $\Z_{2n}$-action on~$S^3$.
This induces a $\Z_n$-action (\emph{sic}!) on the quotient
$S^3/S^1=\CP^1$ with two fixed points at $[1:0]$ and $[0:1]$ and
quotient $S^2(n,n)$, so we have a Seifert fibration
\[ S^3/\Z_{2n}\longrightarrow S^2(n,n).\]
In turn, $A_+$ induces a $\Z_2$-action on $S^2(n,n)$.
This action is free. Indeed, if the class of
$(z_1^0,z_2^0)$ in $S^2(n,n)=(S^3/\Z_{2n})/S^1=(S^3/S^1)/\Z_n$
were fixed by~$A_+$,
there would be an odd integer $k$ and an angle $\theta\in\R/2\pi\Z$ such that
$A_+^k(z_1^0,z_2^0)=\theta(z_1^0,z_2^0)$. But
\[ A_+^k(z_1^0,z_2^0)=(\rme^{k\pi\rmi/2n}\oz_2^0,\rme^{-k\pi\rmi/2n}\oz_1^0)
\;\;\;\text{for $k$ odd},\]
so the fixed point condition would imply
\[ \rme^{k\pi\rmi/2n}\oz_2^0=\rme^{\rmi\theta}z_1^0\;\;\text{and}\;\;
\rme^{-k\pi\rmi/2n}\oz_1^0=\rme^{\rmi\theta}z_2^0\;\;\text{for some}\;\;
\theta\in\R/2\pi\Z.\]
Substituting these two equations into each other, we find
\[ z_1^0=\rme^{k\pi\rmi/n}z_1^0\;\;\text{and}\;\;
z_2^0=\rme^{-k\pi\rmi/n}z_2^0,\]
which would yield the contradiction that $2n$ divides~$k$.
In particular, $A_+$ generates a free $\Z_{4n}$-action on~$S^3$, and
we have a Seifert fibration
\[ S^3/\Z_{4n}\longrightarrow\RP^2(n).\]

Next we consider the free $\Z_{4n}$-action on $S^3$ generated by
\[ A_{2n-1}\co (z_1,z_2)\longmapsto (\rme^{\pi\rmi/2n}z_1,
\rme^{\pi\rmi(2n-1)/2n}z_2).\]
This yields the quotient $L(4n,2n-1)$. We now want to show
that $A_+$ and $A_{2n-1}$ are conjugate in $\mathrm{SO}(4)$,
which proves the existence of a Seifert fibration
$L(4n,2n-1)\rightarrow\RP(n)$.

In quaternionic notation $z_1+z_2\rmj=:a_0+a_1\rmi+a_2\rmj+a_3\rmk=:a\in
S^3\subset\Hp$, the maps $A_+$ and $A_{2n-1}$ take the form
\[ A_+(a)=-\sin(\pi/2n)\,\rmk a+\cos(\pi/2n)\,\rmk a\rmi\]
and
\[ A_{2n-1}(a)=\sin(\pi/2n)\,\rmi a-\cos(\pi/2n)\,\rmi a\rmi.\]
With $\Phi\in\mathrm{SO}(4)$ defined by
\[ \Phi(a)=\frac{1+\rmi-\rmj-\rmk}{2}\, a\]
we have $\Phi\circ A_+=A_{2n-1}\circ\Phi$.

In order to find a Seifert fibration $L(4n,2n+1)\rightarrow\RP^2(n)$,
consider the conjugation
$A_-$ of $A_+$ by the orientation-reversing diffeomorphism
$(z_1,z_2)\mapsto (z_1,\oz_2)$ of~$S^3$. Then argue analogously,
or observe that the quotient of
$S^3$ under this conjugated $\Z_{4n}$-action generated
by $A_-$ is $L(4n,2n-1)$
with reversed orientation, which is $L(4n,2n+1)$,
cf.\ \cite[Section~3.2]{gela18}.

Since the $M\bigl(-1;(n,\pm 1)\bigr)$ are the only
Seifert fibrations over $\RP^2(n)$ with total space
having a finite cyclic fundamental group, each of the lens spaces
$L(4n,2n\pm 1)$ has a unique Seifert fibration over $\RP^2(n)$.

The correct identification of the signs in the diffeomorphism
$M\bigl(-1;(n,\pm1)\bigr)\cong L(4n,2n\mp 1)$ (as oriented manifolds)
follows from \cite[Theorem~4.4]{gela18} and \cite[Theorem~5.1]{jane83}.
\end{proof}

\begin{rem}
(1) As in \cite{gela18} one can find a conjugating map $\Phi$
that conjugates $A_+$ and $A_-$ simultaneously to the
standard action giving the quotient $L(4n,2n\mp 1)$.

(2) Without explicit computation, one knows from
\cite{scot83} that $A_+$ must be conjugate in $\mathrm{SO}(4)$
to a map of the form
\[ A_{\tilde{q}}\co (z_1,z_2)\longmapsto
(\rme^{\pi\rmi/2n}z_1,\rme^{\pi\rmi\tilde{q}/2n}z_2)\]
for some $\tilde{q}\in\{1,\ldots,4n-1\}$ coprime to $4n$. Since
$\mathrm{trace}(A_{\tilde{q}})=\mathrm{trace}(A_+)=0$, we must have
$\cos(\pi\tilde{q}/2n)=-\cos(\pi/2n)$, that is, $\tilde{q}=2n\pm 1$.
This gives a Seifert fibration of one of $L(4n,2n\pm 1)$ over
$\RP^2(n)$, and one then concludes as before.

(3) A lens space that Seifert fibres over $\RP^2(n)$ can be
decomposed into a Seifert fibration over $D^2(n)$ and an
$S^1$-bundle over the M\"obius band. Since the lens space is
orientable, the latter bundle restricts over the soul of the
M\"obius band to the $S^1$-fibration of the Klein
bottle over~$S^1$. As shown by Bredon--Wood~\cite{brwo69}, the only
lens spaces that contain embedded Klein bottles are the $L(4n,2n\pm 1)$.
For an explicit description
of a Klein bottle in $L(4,1)$ see~\cite{lrs15}.
\end{rem}
\begin{ack}
We are most grateful to Tye Lidman, Liam Watson and Robert DeYeso
for pointing out the gap in \cite[Lemma~4.1]{gela18}
and for the reference to the work of Bredon--Wood and
Levine--Ruberman--Strle.
\end{ack}

\end{document}